\newcommand{\ds}{\displaystyle}
\newcommand{\nn}{\nonumber}
\newcommand{\ra}{\rightarrow}
\newcommand{\ex}{\exists}
\newcommand{\st}{\mbox{~s.t.~}}
\newcommand{\leftright}[3]{\left#1#2\right#3}
\newcommand{\BrS}[1]{\leftright{(}{#1}{)}}
\newcommand{\BrM}[1]{\left\{#1\right\}}
\newcommand{\BrA}[1]{\left|#1\right|}
\newcommand{\ol}[1]{\overline{#1}}
\newcommand{\Diag}{\textrm{diag}}
\newcommand{\Dim}{d}
\newcommand{\midd}{\;\middle|\;}
\newcommand{\Set}[2]{\left\{#1\midd#2\right\}}
\newcommand{\SetNd}[2]{\{#1\mid#2\}}
\newcommand{\Meas}[1]{\BrA{#1}} 
\newcommand{\Norm}[2]{\left\|#1\right\|_{#2}} 
\newcommand{\Grad}{\nabla}
\newcommand{\Lap}{\Delta}
\newcommand{\Div}{\nabla\cdot}
\newcommand{\Dm}{\varOmega}
\newcommand{\DmOl}{\ol{\Dm}}
\renewcommand{\H}{H}
\newcommand{\DmH}{\Dm_\H}
\newcommand{\Bd}{\Gamma}
\newcommand{\BdH}{\Bd_{\H}}
\newcommand{\N}{N}
\newcommand{\PtSetCher}{\mathcal{X}}
\newcommand{\PtSet}{\PtSetCher_{\N}}
\newcommand{\IndexSet}[1]{\Lambda(#1)}
\renewcommand{\i}{i}
\renewcommand{\j}{j}
\renewcommand{\k}{k}
\newcommand{\PtChar}{x}
\newcommand{\Pt}[2]{\PtChar_{#1}^{\TimeIndex{#2}}}
\newcommand{\Dx}{\Delta\x}
\newcommand{\h}{h}
\newcommand{\WeightCher}{w}
\newcommand{\Weight}{\WeightCher}
\newcommand{\WeightH}{\Weight_{\h}}
\newcommand{\w}{\Weight}
\newcommand{\wh}{\WeightH}
\newcommand{\FsSolCher}{V}
\newcommand{\FsSol}{\FsSolCher}
\newcommand{\FsSolApp}{\FsSolCher_{\h}}
\newcommand{\FuncCher}{v}
\newcommand{\Func}{\FuncCher}
\newcommand{\PvChar}{\omega}
\newcommand{\Pv}[1]{\PvChar_{#1}}
\newcommand{\Pvi}{\Pv{\i}}
\newcommand{\PvSetCher}{\mathcal{V}}
\newcommand{\PvSet}{\PvSetCher_{N}}
\newcommand{\GradApp}[1]{\Grad_\h^{\TimeIndex{#1}}}
\newcommand{\LapApp}[1]{\Delta_\h^{\TimeIndex{#1}}}
\newcommand{\DivAppPlus}{\Grad_\h^{+}\cdot}
\newcommand{\FsWeightFunc}{\mathcal{W}}
\newcommand{\PvTempCher}{\widetilde{\PvChar}}
\newcommand{\PvTemp}[1]{\PvTempCher_{\i}}
\newcommand{\FsCChar}{C}
\newcommand{\FsCz}[1]{\FsCChar(#1)}
\newcommand{\FsC}[2]{\FsCChar^{#1}(#2)}
\newcommand{\TimeIndex}[1]{#1}
\newcommand{\mderivD}{{\rm D}}
\newcommand{\mderiv}[3]{
	\ifnum #1=1	\frac{\mderivD#3}{\mderivD{#2}}%
	\else{\frac{\mderivD^{#1}#3}{\mderivD^{#1}{#2}} %
	}\fi%
}
\newcommand{\dN}{\mathbb{N}}
\newcommand{\dR}{\mathbb{R}}
\newcommand{\dRd}{\mathbb{R}^{\Dim}}
\newcommand{\xchar}{x}
\newcommand{\x}{\xchar}
\newcommand{\ychar}{y}
\newcommand{\y}{\ychar}
\newcommand{\dx}{d\x}
\def\deq{\mathrel{\mathop:}=}%
\newcommand{\Sol}{u}
\newcommand{\DmFuncCher}{f}
\newcommand{\DmFunc}{\DmFuncCher}
\newcommand{\DmFuncEx}{\widehat{\DmFuncCher}}
\newcommand{\SolApp}{\mathrm{\Sol}}
\newcommand{\Np}{\N_{\Dm}}
\newcommand{\IndexSetSeq}[2]{{#1}_{#2}}
\newcommand{\IndexSetSeqLast}{K}
\newcommand{\InnerProdDisc}[3]{(#1,#2)_{h(#3)}}
\newcommand{\NormDiscL}[3]{\Norm{#1}{\ell^{#2}(#3)}}
\newcommand{\NormDiscHz}[3]{\Norm{#1}{h^{#2}_0(#3)}}
\newtheorem{theorem}{Theorem}
\newtheorem{definition}[theorem]{Definition}
\newtheorem{lemma}[theorem]{Lemma}
\newtheorem{remark}[theorem]{Remark}
\newcommand{\ConstSemiReg}{c_0}
\newcommand{\Matrix}{A}
\newcommand{\MatrixDiag}{D}
\newcommand{\VecSol}{y}
\newcommand{\VecSouce}{b}
\newcommand{\MatrixComp}[2]{a_{#1#2}}
\newcommand{\CofPositiveDifinite}{\alpha}
\newcommand{\InteractCofGrad}[2]{I_{#1 #2}}
\newcommand{\InteractCofLap}[2]{J_{#1 #2}}
\newcommand{\ConstThm}{c}
\newcommand{\IndexSetLatticeNext}[1]{\lambda_{#1}}
\newcommand{\DmLattice}[1]{\sigma_{#1}}
\newcommand{\DmLatticeOl}[1]{\ol{\sigma}_{#1}}
\newcommand{\BdLattice}[2]{\gamma_{#1#2}}
\title[Stability of a generalized particle method for a Poisson equation by discrete Sobolev norms]{Stability of a generalized particle method for a Poisson equation by \\discrete Sobolev norms}
\author[Y. Imoto]{Yusuke Imoto${}^\dagger$}
\address{${}^\dagger$Tohoku Forum for Creativity, Tohoku University, 2-1-1 Katahira, Aoba-ku, Sendai 980-8577 Japan}
\email{y-imoto@tohoku.ac.jp}
\keywords{generalized particle method, Poisson equation, stability, discrete Sobolev norm}
\begin{document}
	
	\begin{abstract}
		Numerical analysis is conducted for a generalized particle method for a Poisson equation. 
		Unique solvability is derived for the discretized Poisson equation by introducing a connectivity condition for particle distributions. 
		Moreover, by introducing discrete Sobolev norms and a semi-regularity of a family of discrete parameters, stability is obtained for the discretized Poisson equation based on the norms.  
	\end{abstract}

\maketitle

\section{Introduction}
Numerical analysis is conducted for the generalized particle method introduced in \cite{imoto2016phd}. 
This method is a generalization of conventional particle methods such as smoothed particle hydrodynamics \cite{liu2010smoothed} and the moving particle semi-implicit \cite{koshizuka1996moving}. 
For the generalized particle method, we have established numerical analyses involving the truncation error estimates of approximate operators \cite{imoto2016teintpV,imoto2017tederiV} and the error estimates for the Poisson and heat equations based on the maximum norm \cite{imoto2016phd}. 
Therefore, as the next step of this study, we focus on the numerical analyses of the generalized particle method using discrete Sobolev norms.

This study considers a Poisson equation with a source term given by a divergence form. 
This formulation is selected because it has several practical applications. 
For example, a pressure Poisson equation, which is appeared in formulations of particle methods for the incompressible Navier--Stokes equations \cite{shao2003incompressible}, uses a source term including a divergence of a predictor of velocity. 

A connectivity condition for particle distributions and a semi-regularity of a family of discrete parameters are introduced for analyzing the discretized Poisson equation. 
By virtue of the connectivity condition, a unique solvability of the discretized Poisson equation is derived. 
Moreover, by demonstrating certain properties of the discrete Sobolev norms, such as integration by parts, the stability of the discretized Poisson equation is obtained with semi-regularity.   

\section{Formulation}
\label{sec:formulation}
Let $\Dm\subset\dRd\,(\Dim\in\dN)$ be a bounded domain with smooth boundary $\Bd$. 
Let $\FsCz{\DmOl}$ be the space of real continuous functions defined in $\DmOl$.
For $k\in\dN$, let $\FsC{k}{\DmOl}$ be the space of functions in $\FsCz{\DmOl}$ with derivatives up to the $k$th order. 
We define a function space, $\FsSol$, as
\begin{equation}
	\FsSol\deq\Set{\Func\in\FsCz{\DmOl}}{\Func(\x)=0\,(\x\in\Bd)}.
\end{equation}
Then, we consider the following Poisson equation with a homogeneous boundary condition:
\begin{equation}
	\mbox{Find}~\Sol\in\FsSol\cap\FsC{2}{\DmOl} \st -\Lap\Sol = \Div\DmFunc,
	\label{Poisson}
\end{equation}
where $\DmFunc\in(\FsSol\cap\FsC{1}{\DmOl})^\Dim$ is given. 

We introduce approximate operators in the generalized particle method. 
Let $\H$ be a fixed positive number. 
For $\Dm$ and $\H$, we define $\DmH\subset\dRd$ and $\BdH$ by 
\begin{align}
	\DmH &\deq \left\{\x\in\dRd \midd \ex \y\in\Dm \st |\x-\y|<\H\right\}, \\
	\BdH &\deq \DmH\setminus\Dm. 
\end{align}
For $\N\in\dN$, we define a particle distribution, $\PtSet$, and a particle volume set, $\PvSet$, as
\begin{align}
	\PtSet \deq \Set{\Pt{\i}{}\in\Dm}{\i=1,2,\dots,\N,\,\Pt{\i}{}\neq\Pt{\j}{}\,(\i\neq\j)}, 
	\label{def:PtSet}\\
	\PvSet \deq \Set{\Pv{\i}>0}{\i=1,2,\dots,\N,\,\sum_{\i=1}^\N\Pv{\i}=\Meas{\DmH}}. 
	\label{def:PvSet}
\end{align}
Here, $\Meas{\DmH}$ denotes the volume of $\DmH$. 
We refer to $\Pt{\i}{}\in\PtSet$ and $\Pv{\i}\in\PvSet$ as a particle and particle volume, respectively.   
We define a function set, $\FsWeightFunc$, as
\begin{equation}
	\FsWeightFunc \deq \Set{\w:[0,\infty)\ra\dR}{\w(r)>0\,(0<r<1),\w(r)=0\,(r\geq1),\,\int_{\dRd}\w(|\x|)\dx=1}. 
\end{equation}
We refer to $\w\in\FsWeightFunc$ as a reference weight function. 
We define an influence radius, $\h$, as a positive number that satisfies $\min\SetNd{|\Pt{\i}{}-\Pt{\j}{}|}{\i\neq\j}<\h<\H$. 
For reference weight function $\w\in\FsWeightFunc$ and influence radius $\h$, we define a weight function, $\wh:[0,\infty)\ra\dR$, as
\begin{equation}
	\wh(r) \deq \frac{1}{\h^{\Dim}}\w\left(\frac{r}{\h}\right). 
	\label{def:wh}
\end{equation}
Then, for discrete parameters $(\PtSet, \PvSet, \h)$ and reference weight function $\w\in\FsWeightFunc$, we define an approximate divergence operator, $\DivAppPlus$, for $\psi:\PtSet\ra\dRd$ as
\begin{equation}
	\DivAppPlus\psi_{\i} \deq \Dim\sum_{\j\neq\i}\Pv{\j} \frac{\psi_{\j}+\psi_{\i}}{|\Pt{\j}{}-\Pt{\i}{}|}\cdot\frac{\Pt{\j}{}-\Pt{\i}{}}{|\Pt{\j}{}-\Pt{\i}{}|} \wh(|\Pt{\j}{}-\Pt{\i}{}|)
	\label{def:DivAppPlus}
\end{equation}
and an approximate Laplace operator, $\LapApp{}$, for $\phi:\PtSet\ra\dR$ as
\begin{equation}
	\LapApp{} \phi_{\i}\deq 2\Dim\sum_{\j\neq\i}\Pv{\i} \frac{\phi_{\j}-\phi_{\i}}{|\Pt{\j}{}-\Pt{\i}{}|^2} \wh(|\Pt{\j}{}-\Pt{\i}{}|), 
	\label{def:LapApp}
\end{equation}
where $\psi_{\i}\deq\psi(\Pt{\i}{})$ and  $\phi_{\i}\deq\phi(\Pt{\i}{})$. 

We define an index set, $\IndexSet{S}\,(S\subset\dRd)$, and a function space, $\FsSolApp$, as 
\begin{align}
	&\IndexSet{S}\deq \BrM{\i=1,2,\dots,\N\midd \Pt{\i}{}\in \PtSet\cap S},\\
	&\FsSolApp\deq\Set{\Func:\PtSet\ra\dR}{\Func(\Pt{\i}{})=0\,(\i\in\IndexSet{\BdH})}. 
\end{align}
Then, we consider the following generalized particle method for the Poisson equation: 
\begin{equation}
	\mbox{Find}~\SolApp\in\FsSolApp \st -\LapApp{}\SolApp_{\i} = \DivAppPlus\DmFuncEx_{\i}\quad\i\in\IndexSet{\Dm}, 
	\label{Poisson_disc}
\end{equation}
where $\DmFuncEx\in\FsSolApp^\Dim$ such that $\DmFuncEx_{\i}=\DmFunc_{\i}\,(\i\in\IndexSet{\Dm})$, $\DmFuncEx_{\i}=0\,(\i\in\IndexSet{\BdH})$. 

\begin{remark}
	In particle methods, approximate operators of the first derivative, including the difference in function values, have also been proposed, such as in \cite{liu2010smoothed}. 
	To distinguish between our operators and their operators, we used the notation with the plus symbol, e.g., $\DivAppPlus$ in \eqref{def:DivAppPlus}.
\end{remark}

\begin{remark}
We can introduce approximate operators \eqref{def:DivAppPlus} and \eqref{def:LapApp} using the weighted averages of approximations based on the finite volume method, as shown in Appendix \ref{appendix:deriv_app_diver}. 
\end{remark}

\section{Connectivity and semi-regularity}
We introduce a connectivity condition for particle distribution $\PtSet$ and a semi-regular condition for a family of discrete parameters. 
\begin{definition}
\label{def_connectivity}
For influence radius $\h$, we consider that particle distribution $\PtSet$ satisfies $\h$-connectivity if for all $\i\in\IndexSet{\Dm}$, there exists an integer, $\IndexSetSeqLast\,(\leq\N)$, and a sequence, $\{\IndexSetSeq{\i}{\k}\}_{\k=1}^{\IndexSetSeqLast}\subset\{1,2,\dots,\N\}$, such that
\begin{equation}
	\IndexSetSeq{\i}{1}=\i,\quad|\Pt{\IndexSetSeq{\i}{\k}}{}-\Pt{\IndexSetSeq{\i}{\k+1}}{}|<\h\,(1\leq\k<\IndexSetSeqLast),\quad\IndexSetSeq{\i}{\k}\in\IndexSet{\Dm}\,(1\leq\k<\IndexSetSeqLast),\quad\IndexSetSeq{\i}{\IndexSetSeqLast}\in\IndexSet{\BdH}. 
	\label{h_connectivity}
\end{equation}
\end{definition}

\begin{definition}
	A family, $\{(\PtSet, \PvSet, \h)\}$, is \textit{semi-regular} if there exists a positive constant, $\ConstSemiReg$, such that for all elements of the family,
 \begin{equation}
  \max_{\i=1,2,\dots,\N}\BrM{\sum_{\j\neq\i}\Pv{\j}\wh(|\Pt{\j}{}-\Pt{\i}{}|)}\leq \ConstSemiReg. 
  \label{def:semi-regular}
 \end{equation}
 We refer to the constant, $\ConstSemiReg$, as a semi-regular constant. 
  \end{definition}

\begin{remark}
Consider graph $G$, whose vertex set is particle distribution $\PtSet$ and whose edges are a pair, $(\Pt{\i}{}, \Pt{\j}{})$, which satisfies $0<|\Pt{\j}{}-\Pt{\i}{}|<\h$, e.g., see Figure \ref{fig:connectivity}. 
By Definition \ref{def_connectivity}, particle distribution $\PtSet$ satisfies $\h$-connectivity is equivalent to all vertices of $G$ on $\Dm$ have a path to a vertex of $G$ on $\BdH$. 
\end{remark}

\begin{figure}[t]
\centering
\includegraphics[width=100mm,bb=0 0 1397.7mm 418mm]{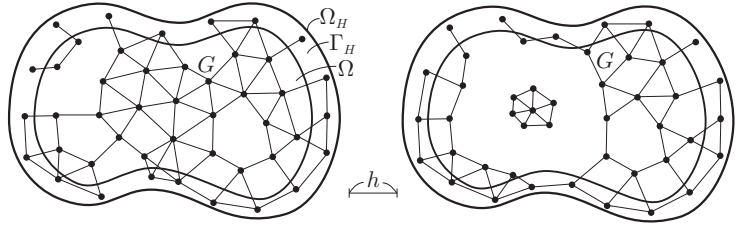}
\caption{Particle distributions $\PtSet$ and their graphs $G$: the left and right sides show the particle distributions with and without $\h$-connectivity, respectively.}
\label{fig:connectivity}
\end{figure}

\section{Stability analysis}

First, we show the unique solvability for the discrete Poisson equation \eqref{Poisson_disc}. 
\begin{theorem}
\label{theorem:unique_solvability}
If particle distribution $\PtSet$ satisfies $\h$-connectivity, then discrete Poisson equation \eqref{Poisson_disc} has a unique solution. 
\end{theorem}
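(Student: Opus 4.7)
The plan is to recast \eqref{Poisson_disc} as a square linear system in the interior unknowns $\{\SolApp_{\i}\}_{\i \in \IndexSet{\Dm}}$ and to prove injectivity via a discrete strong maximum principle; since the number of equations equals the number of unknowns, injectivity will automatically deliver unique solvability.

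First I would rewrite the equation for each $\i \in \IndexSet{\Dm}$ as
\[
-\LapApp{}\SolApp_{\i} = \sum_{\j \neq \i} a_{\i\j}(\SolApp_{\i} - \SolApp_{\j}), \qquad a_{\i\j} \deq 2\Dim\Pv{\i}\frac{\wh(|\Pt{\j}{}-\Pt{\i}{}|)}{|\Pt{\j}{}-\Pt{\i}{}|^2},
\]
and observe from \eqref{def:wh} together with $\w(r) > 0$ on $(0,1)$ that $a_{\i\j} \geq 0$, with $a_{\i\j} > 0$ precisely when $0 < |\Pt{\j}{}-\Pt{\i}{}| < \h$. Since $\SolApp_{\j} = 0$ for $\j \in \IndexSet{\BdH}$, the resulting system for the interior unknowns has non-positive off-diagonal entries and weakly diagonally dominant rows.

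Next I would carry out a discrete strong maximum principle argument on the homogeneous problem. Suppose $\SolApp \in \FsSolApp$ satisfies $-\LapApp{}\SolApp_{\i} = 0$ for all $\i \in \IndexSet{\Dm}$, set $M \deq \max_{\i \in \IndexSet{\Dm}} \SolApp_{\i}$, and assume for contradiction that $M > 0$. Pick $\i^\ast$ attaining the maximum; then
\[
0 = -\LapApp{}\SolApp_{\i^\ast} = \sum_{\j \neq \i^\ast} a_{\i^\ast\j}\,(M - \SolApp_{\j}).
\]
Each summand is non-negative because $\SolApp_{\j} \leq M$ on $\IndexSet{\Dm}$ and $\SolApp_{\j} = 0 < M$ on $\IndexSet{\BdH}$, so every summand vanishes. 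In particular, $\SolApp_{\j} = M$ for every $\j$ with $0 < |\Pt{\j}{}-\Pt{\i^\ast}{}| < \h$. Invoking the $\h$-connectivity chain $\{\IndexSetSeq{\i^\ast}{\k}\}_{\k=1}^{K}$ from Definition \ref{def_connectivity}, a straightforward induction on $\k$ then yields $\SolApp_{\IndexSetSeq{\i^\ast}{\k}} = M$ for every $\k < K$, and one more application of the argument at $\IndexSetSeq{\i^\ast}{K-1}$ forces $\SolApp_{\IndexSetSeq{\i^\ast}{K}} = M > 0$, contradicting $\IndexSetSeq{\i^\ast}{K} \in \IndexSet{\BdH}$ together with $\SolApp \in \FsSolApp$. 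Hence $M \leq 0$; replacing $\SolApp$ by $-\SolApp$ gives $M \geq 0$, so $\SolApp \equiv 0$, and both injectivity and existence follow from squareness of the system.

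The main difficulty I anticipate is not the maximum-principle step itself but the faithful bridge between the purely graph-theoretic hypothesis of $\h$-connectivity and the analytic positivity of the coefficients $a_{\i\j}$ along each edge of the chain. Specifically, one must exploit the distinctness clause of \eqref{def:PtSet} so that each denominator $|\Pt{\IndexSetSeq{\i^\ast}{\k+1}}{}-\Pt{\IndexSetSeq{\i^\ast}{\k}}{}|$ is nonzero, together with the strict inequality in \eqref{h_connectivity}, which places each consecutive pair strictly inside the support of $\wh$. Once these two observations are in hand, the inductive propagation of the maximum along the chain is routine.
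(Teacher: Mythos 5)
Your proof is correct, but it takes a genuinely different route from the paper's. The paper reduces \eqref{Poisson_disc} to the square system $\Matrix\MatrixDiag\VecSol=\VecSouce$ and shows that the symmetric matrix $\Matrix$ is positive definite: the quadratic form $\sum_{\i,\j}\CofPositiveDifinite_{\i}\CofPositiveDifinite_{\j}\Matrix_{\i\j}$ is rewritten as a sum of nonnegative terms (a discrete Dirichlet energy of weighted differences plus boundary contributions), and $\h$-connectivity is then used to walk along the chain of Definition \ref{def_connectivity} until a strictly positive term is reached, so the form cannot vanish for a nonzero vector. You instead prove injectivity of the homogeneous problem by a discrete strong maximum principle, propagating an interior maximum along the same chain to a boundary particle where the value is forced to be zero. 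The connectivity hypothesis enters both arguments identically --- strict positivity of the coupling coefficient on each edge, which as you observe needs only $0<|\Pt{\j}{}-\Pt{\i}{}|<\h$ and $\w>0$ on $(0,1)$ --- but the surrounding algebra differs. The paper's route buys more for the rest of the paper: positive definiteness of $\Matrix$ is essentially the energy identity $-\InnerProdDisc{\LapApp{}\phi}{\phi}{\Dm}=\NormDiscHz{\phi}{1}{\DmH}^2$ of Lemma \ref{lem:propaties_disc_norm}, which is the backbone of Theorem \ref{thm:stability}, and it certifies applicability of symmetric solvers. Your route is more elementary, does not rely on symmetry of the coefficients (so it is robust to the $\Pv{\i}$-versus-$\Pv{\j}$ ambiguity in \eqref{def:LapApp} and to nonsymmetric variants), and yields a comparison principle as a by-product, but it gives no quantitative bound and the energy identity would still have to be proved separately for the stability analysis. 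One small remark: the distinctness issue you flag is a non-issue in the only place it could bite --- if a chain repeats an index the propagation step is vacuous, and the final link necessarily joins a particle of $\IndexSet{\Dm}$ to one of $\IndexSet{\BdH}$, hence two distinct particles, so the coefficient you need there is indeed strictly positive.
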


\begin{proof}
Let $\Np$ be the number of particles included in $\Dm$. 
We renumber the index of particles so that $\Pt{\i}{}\in\Dm\,(\i=1,2,\dots,\Np)$. 
Let $\MatrixComp{\i}{\j}\,(i,j=1,2,\dots,N)$ be
\begin{align}
\MatrixComp{\i}{\j}&\deq 
\begin{cases}
\ds 0, \quad & i=j,\\
\ds 2 \Dim \frac{\wh(|\Pt{\j}{}-\Pt{\i}{}|)}{|\Pt{\j}{}-\Pt{\i}{}|^2}, \quad & \i\neq\j. 
\end{cases}
\end{align}
We define matrix $\Matrix\in\dR^{\Np\times\Np}$ as
\begin{equation}
	\Matrix_{\i\j}\deq 
	\begin{cases}
		\ds \sum_{k=1}^{\N} \frac{\Pv{k}}{\Pv{\i}}\MatrixComp{\i}{\k}, \quad & i=j,\\
		\ds -\MatrixComp{\i}{\j}, \quad & \i\neq\j,
	\end{cases}
\end{equation}
Then, discrete Poisson equation \eqref{Poisson_disc} is equivalent to
\begin{equation}
	\mbox{Find}~\VecSol\in\dR^{\Np} \st \Matrix\MatrixDiag\VecSol=\VecSouce, 
\end{equation}
where $\MatrixDiag\deq \Diag(\Pv{\i})$, $\VecSouce_{\i}\deq\DivAppPlus\DmFuncEx_{\i}\,(\i=1,2,\dots,\Np)$, and $\VecSol_{\i}\deq \SolApp_{\i}\,(\i=1,2,\dots,\Np)$. 
As $\Pv{\i}>0\,(\i=1,2,\dots,\N)$, diagonal matrix $\MatrixDiag$ is a regular matrix. 
Therefore, it is sufficient to prove that $\Matrix$ is a regular matrix. 
As $\Matrix$ is symmetric, we will prove that $\Matrix$ is a positive definite matrix. 
For all $\CofPositiveDifinite\in\dR^{\Np}\setminus\{0\}$, we have
\begin{align}
	\sum_{i,j=1}^{\Np} \CofPositiveDifinite_{\i}\CofPositiveDifinite_{\j} \Matrix_{\i\j}
		&= 2\sum_{1\leq i<j\leq\Np} \CofPositiveDifinite_{\i}\CofPositiveDifinite_{\j} \Matrix_{\i\j} + \sum_{\i=1}^{\Np} \CofPositiveDifinite_{\i}^2 \Matrix_{\i\i}
	\nn\\
	& = -2\sum_{1\leq i<j\leq\Np}\CofPositiveDifinite_{\i}\CofPositiveDifinite_{\j} \MatrixComp{\i}{\j} + \sum_{\i=1}^{\Np} \CofPositiveDifinite_{\i}^2 \sum_{k=1}^{\N} \frac{\Pv{k}}{\Pv{\i}}\MatrixComp{\i}{\k}
	\nn\\
	& = \sum_{1\leq i<j\leq\Np}\frac{\BrS{\Pv{\j} \CofPositiveDifinite_{\i}-\Pv{\i}\CofPositiveDifinite_{\j}}^2}{\Pv{\i} \Pv{\j}}\MatrixComp{\i}{\j} + \sum_{\i=1}^{\Np} \CofPositiveDifinite_{\i}^2 \sum_{k=\Np+1}^{\N} \frac{\Pv{k}}{\Pv{\i}}\MatrixComp{\i}{\k}. 
	\label{prf_poisson_unique_01}
\end{align}
As $\MatrixComp{\i}{\j}$ is nonnegative, \eqref{prf_poisson_unique_01} is nonnegative. 
For $a\in\dR^{\Np}\setminus\{0\}$, we set $\i$ such that $\CofPositiveDifinite_{\i}\neq 0$. 
Because of particle distribution $\PtSet$ with $\h$-connectivity, we can consider a sequence, $\{\IndexSetSeq{\i}{\k}\}_{\k=1}^{\IndexSetSeqLast}$, such that \eqref{h_connectivity}. 
As all terms of the last equation in \eqref{prf_poisson_unique_01} are nonnegative, we have 
\begin{align}
	\sum_{i,j=1}^{\Np} \CofPositiveDifinite_{\i}\CofPositiveDifinite_{\j} \Matrix_{\i\j}
	& \geq \sum_{k=1}^{\IndexSetSeqLast-1}\frac{\BrS{\Pv{\IndexSetSeq{\i}{\k+1}} \CofPositiveDifinite_{\IndexSetSeq{\i}{\k}}-\Pv{\IndexSetSeq{\i}{\k}}\CofPositiveDifinite_{\IndexSetSeq{\i}{\k+1}}}^2}{\Pv{\IndexSetSeq{\i}{\k}}\Pv{\IndexSetSeq{\i}{\k+1}}}\MatrixComp{\IndexSetSeq{\i}{\k}}{\IndexSetSeq{\i}{\k+1}}+\frac{\Pv{\IndexSetSeq{\i}{\IndexSetSeqLast}} }{\Pv{\IndexSetSeq{\i}{\IndexSetSeqLast-1}}}\CofPositiveDifinite_{\IndexSetSeq{\i}{\IndexSetSeqLast}}^2 \MatrixComp{\IndexSetSeq{\i}{\IndexSetSeqLast-1}}{ \IndexSetSeq{\i}{\IndexSetSeqLast}}. 
	\label{proof:unieque:positive_definite_subseq}
\end{align}
As $|\Pt{\IndexSetSeq{\i}{\k}}{}-\Pt{\IndexSetSeq{\i}{\k+1}}{}|<\h$, the value of $\MatrixComp{\IndexSetSeq{\i}{\k}}{\IndexSetSeq{\i}{\k+1}}\,(\k=1,2,\dots,\IndexSetSeqLast-1)$ is positive. 
Thus, if the right hand side of \eqref{proof:unieque:positive_definite_subseq} equals zero, then $\CofPositiveDifinite_{\IndexSetSeq{\i}{\k}}=0\,(\k=1,2,\dots,\IndexSetSeqLast)$. 
As this is inconsistent with $\CofPositiveDifinite_{\i}\,(=\CofPositiveDifinite_{\IndexSetSeq{\i}{1}})\neq 0$, the right hand side of \eqref{proof:unieque:positive_definite_subseq} is positive. 
Therefore, matrix $\Matrix$ is a positive definite matrix. 
\end{proof}

Next, we introduce a few notations and show certain lemmas. 
Hereafter, assume that particle distribution $\PtSet$ satisfies $\h$-connectivity.  
For $S\subset\dRd$ and $n\in\dN$, we define a discrete inner product, $\InnerProdDisc{\cdot}{\cdot}{S}:\FsSolApp^n\times\FsSolApp^n\ra\dR$, a discrete $L^2$ norm, $\NormDiscL{\cdot}{2}{S}:\FsSolApp^n\ra\dR$, and a discrete $H^1_0$ norm, $\NormDiscHz{\cdot}{1}{S}:\FsSolApp^n\ra\dR$, as
\begin{align}
	&\InnerProdDisc{\phi}{\psi}{S} \deq \sum_{\i\in\IndexSet{S}}\Pvi\,\phi_{\i}\cdot\psi_{\i},\\
	&\NormDiscL{\phi}{2}{S} \deq \InnerProdDisc{\phi}{\phi}{S}^{1/2}=\BrS{\sum_{\i\in\IndexSet{S}}\Pvi\,\phi_{\i}^2}^{1/2},\\
	&\NormDiscHz{\phi}{1}{S} \deq \BrS{\Dim \sum_{\i\in\IndexSet{S}}\Pv{\i} \sum_{\j\neq\i}\Pv{\j} \frac{|\phi_{\j}-\phi_{\i}|^2}{|\Pt{\j}{}-\Pt{\i}{}|^2}\wh(|\Pt{\j}{}-\Pt{\i}{}|)}^{1/2}.
\end{align}
For $\phi:\PtSet\ra\dR$, we define an approximate gradient operator, $\GradApp{}$, by
\begin{equation}
	\GradApp{} \phi_{\i} \deq \Dim\sum_{\j\neq\i}\Pv{\j}\frac{\phi_{\j}-\phi_{\i}}{|\Pt{\j}{}-\Pt{\i}{}|}\frac{\Pt{\j}{}-\Pt{\i}{}}{|\Pt{\j}{}-\Pt{\i}{}|}\wh(|\Pt{\j}{}-\Pt{\i}{}|)
	\label{def:GradApp}
\end{equation}
Then, we obtain the following lemma: 

\begin{lemma}
	\label{lem:propaties_disc_norm}
	For $\phi\in\FsSolApp$ and $\psi\in\FsSolApp^\Dim$, we have 
	\begin{equation}
		\InnerProdDisc{\DivAppPlus\psi}{\phi}{\Dm}
			=-\InnerProdDisc{\psi}{\GradApp{}\phi}{\Dm},
			\label{lem:propaties_disc_norm:integral_of_parts}
	\end{equation}
	\begin{equation}
		-\InnerProdDisc{\LapApp{}\phi}{\phi}{\Dm}=\NormDiscHz{\phi}{1}{\DmH}^2\geq\NormDiscHz{\phi}{1}{\Dm}^2.
			\label{lem:propaties_disc_norm:equiv_norm}
	\end{equation}
\end{lemma}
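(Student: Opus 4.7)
The plan is to prove both identities by the same symmetrization trick: expand each inner product as a double sum over index pairs $(i,j)$ with $i\neq j$, extend the outer sum from $\IndexSet{\Dm}$ to the full index set $\{1,\dots,N\}$ using the zero-boundary condition built into $\FsSolApp$, then exploit the symmetry or antisymmetry of each factor under the swap $i \leftrightarrow j$.

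For the integration-by-parts identity \eqref{lem:propaties_disc_norm:integral_of_parts}, I first substitute \eqref{def:DivAppPlus} into $\InnerProdDisc{\DivAppPlus\psi}{\phi}{\Dm}$ and, using $\phi_i=0$ for $i\in\IndexSet{\BdH}$, rewrite it as a double sum over all ordered pairs $i\neq j$ in $\{1,\dots,N\}$. Under $i\leftrightarrow j$ the factor $x_j-x_i$ is antisymmetric while $\psi_i+\psi_j$, $|x_j-x_i|^2$, and $\wh(|x_j-x_i|)$ are symmetric; averaging the sum with its swapped copy therefore produces the symmetric expression
\[
\InnerProdDisc{\DivAppPlus\psi}{\phi}{\Dm} = \frac{\Dim}{2}\sum_{i\neq j} \Pv{i}\Pv{j}\,(\phi_i-\phi_j)\,\frac{(\psi_i+\psi_j)\cdot(\Ptj-\Pti)}{|\Ptj-\Pti|^2}\,\wh(|\Ptj-\Pti|).
\]
I then do the analogous computation on $\InnerProdDisc{\psi}{\GradApp{}\phi}{\Dm}$, using this time $\psi_i=0$ on $\IndexSet{\BdH}$ to extend the outer sum. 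Here $(\phi_j-\phi_i)(x_j-x_i)$ is the symmetric factor and $\psi_i+\psi_j$ arises from averaging. Comparing the two symmetrized forms yields the claim with the opposite sign.

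For \eqref{lem:propaties_disc_norm:equiv_norm}, I apply the same strategy to $-\InnerProdDisc{\LapApp{}\phi}{\phi}{\Dm}$: extend the outer sum to all indices using $\phi_i=0$ on $\IndexSet{\BdH}$, and average the resulting double sum with its $i\leftrightarrow j$ swap. Since $(\phi_j-\phi_i)$ is antisymmetric while the weight $\wh(|\Ptj-\Pti|)/|\Ptj-\Pti|^2$ is symmetric, averaging converts $\phi_i(\phi_j-\phi_i)$ into $\frac{1}{2}|\phi_j-\phi_i|^2$, producing
\[
-\InnerProdDisc{\LapApp{}\phi}{\phi}{\Dm} = \Dim\sum_{i\neq j}\Pv{i}\Pv{j}\,\frac{|\phi_j-\phi_i|^2}{|\Ptj-\Pti|^2}\,\wh(|\Ptj-\Pti|).
\]
Because every particle of $\PtSet$ lies in $\DmH$, the right-hand side regrouped as $\sum_{i\in\IndexSet{\DmH}}\Pv{i}\sum_{j\neq i}(\cdots)$ is exactly $\NormDiscHz{\phi}{1}{\DmH}^2$. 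The inequality $\NormDiscHz{\phi}{1}{\DmH}^2\geq\NormDiscHz{\phi}{1}{\Dm}^2$ is then immediate from $\IndexSet{\Dm}\subset\IndexSet{\DmH}$ together with the nonnegativity of each summand.

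The only real obstacle is careful bookkeeping: one must track precisely which factor is symmetric and which is antisymmetric under $i\leftrightarrow j$, and verify that extending the outer summation from $\IndexSet{\Dm}$ to $\{1,\dots,N\}$ is legitimate (it is, exactly because $\phi$ or $\psi$ vanishes on $\IndexSet{\BdH}$ by membership in $\FsSolApp$, while the approximate operators themselves are well-defined at every particle). No use of the $\h$-connectivity or semi-regularity hypothesis is needed here; both are algebraic identities.
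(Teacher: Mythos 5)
Your proof is correct and follows essentially the same route as the paper's: expand each discrete inner product into a double sum, extend the outer sum to all indices via the vanishing of $\phi$ (resp.\ $\psi$) on $\IndexSet{\BdH}$, and symmetrize under $\i\leftrightarrow\j$ using the antisymmetry of $\Pt{\j}{}-\Pt{\i}{}$ and $\phi_{\j}-\phi_{\i}$. The paper packages the kernels as coefficients $\InteractCofGrad{\i}{\j}$ and $\InteractCofLap{\i}{\j}$ and de-symmetrizes one side back into the gradient form rather than symmetrizing both sides, but this is only a cosmetic difference.
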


\begin{proof}
	First, we prove \eqref{lem:propaties_disc_norm:integral_of_parts}. 
	Let $\InteractCofGrad{\i}{\j}$ be
	\begin{equation}
		\InteractCofGrad{\i}{\j}\deq
		\begin{cases}
			0,\quad&\i=\j,\\
			\ds\Dim\frac{\Pt{\j}{}-\Pt{\i}{}}{|\Pt{\j}{}-\Pt{\i}{}|^2} \wh(|\Pt{\j}{}-\Pt{\i}{}|),\quad&\i\neq\j. 
		\end{cases}
		\label{def:InteractCofGrad}
	\end{equation}
	As $\phi\in\FsSolApp$, $\psi\in\FsSolApp^\Dim$, and $\InteractCofGrad{\i}{\j}=-\InteractCofGrad{\j}{\i}$, we have
	\begin{align}
		\InnerProdDisc{\DivAppPlus\psi}{\phi}{\Dm} 
			&=\sum_{\i\in\IndexSet{\Dm}}\Pv{\i}\phi_{\i}\sum_{\j=1}^\N\Pv{\j}\BrS{\psi_{\j}+\psi_{\i}}\cdot\InteractCofGrad{\i}{\j}\\
			&=\sum_{\i=1}^\N\sum_{\j=1}^\N\Pv{\i}\Pv{\j}\phi_{\i}\BrS{\psi_{\j}+\psi_{\i}}\cdot\InteractCofGrad{\i}{\j}\\
			&=\dfrac{1}{2}\sum_{\i=1}^\N\sum_{\j=1}^\N\Pv{\i}\Pv{\j}(\phi_{\i}-\phi_{\j})\BrS{\psi_{\j}+\psi_{\i}}\cdot\InteractCofGrad{\i}{\j}\\
			&=\sum_{\i=1}^\N\Pv{\i}\psi_{\i}\cdot\sum_{\j=1}^\N\Pv{\j}(\phi_{\i}-\phi_{\j})\InteractCofGrad{\i}{\j}\\
			&=-\InnerProdDisc{\psi}{\GradApp{}\phi}{\Dm}. 
	\end{align}
	Next, we prove \eqref{lem:propaties_disc_norm:equiv_norm}. 
	Let $\InteractCofLap{\i}{\j}\,(\geq0)$ be
	\begin{equation}
		\InteractCofLap{\i}{\j}\deq
		\begin{cases}
			0,\quad&\i=\j,\\
			\ds\Dim\frac{\wh(|\Pt{\j}{}-\Pt{\i}{}|)}{|\Pt{\j}{}-\Pt{\i}{}|^2} ,\quad&\i\neq\j. 
		\end{cases}
	\end{equation}
	As $\phi\in\FsSolApp$ and $\InteractCofLap{\i}{\j}=\InteractCofLap{\j}{\i}$, we have
	\begin{align}
		-\InnerProdDisc{\LapApp{}\phi}{\phi}{\Dm}
			&=2\sum_{\i\in\IndexSet{\Dm}}\Pv{\i}\phi_{\i}\sum_{\j=1}^\N\Pv{\j}\BrS{\phi_{\i}-\phi_{\j}}\InteractCofLap{\i}{\j}\\
			&=2\sum_{\i=1}^\N\sum_{\j=1}^\N\Pv{\i}\Pv{\j}\phi_{\i}\BrS{\phi_{\i}-\phi_{\j}}\InteractCofLap{\i}{\j}\\
			&=\sum_{\i=1}^\N\sum_{\j=1}^\N\Pv{\i}\Pv{\j}\BrS{\phi_{\i}-\phi_{\j}}^2\InteractCofLap{\i}{\j}\\
			&=\NormDiscHz{\phi}{1}{\DmH}^2\\
	&=\NormDiscHz{\phi}{1}{\Dm}^2 +
				\sum_{\i\in\IndexSet{\BdH}}\Pv{\i} \sum_{\j=1}^\N\Pv{\j}\phi_{\j}^2\InteractCofLap{\i}{\j}\\
			&\geq\NormDiscHz{\phi}{1}{\Dm}^2. 
	\end{align}
\end{proof}

\begin{lemma}
	\label{lem:inequarity_disc_norm}
	Assume that a family, $\{(\PtSet, \PvSet, \h)\}$, is semi-regular. 
	Then, we have
	\begin{equation}
		\NormDiscL{\GradApp{}\phi}{2}{\Dm}^2
		\leq \Dim\,\ConstSemiReg\NormDiscHz{\phi}{1}{\Dm}^2. 
	\end{equation}
	Here, $\ConstSemiReg$ is the semi-regular constant in \eqref{def:semi-regular}. 
\end{lemma}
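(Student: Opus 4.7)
The plan is to bound $\GradApp{}\phi_{\i}$ pointwise by a weighted sum of absolute differences, apply the Cauchy--Schwarz inequality to separate the ``multiplicity'' factor (which is controlled by the semi-regular constant) from the ``squared difference'' factor (which is precisely the summand of the discrete $H^1_0$ norm), and then sum.

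First I would bound the Euclidean norm of $\GradApp{}\phi_{\i}$ by moving the absolute value inside the sum in \eqref{def:GradApp}; since the unit vector $(\Pt{\j}{}-\Pt{\i}{})/|\Pt{\j}{}-\Pt{\i}{}|$ has norm one, this yields
\begin{equation}
	|\GradApp{}\phi_{\i}|\le \Dim\sum_{\j\neq\i}\Pv{\j}\,\frac{|\phi_{\j}-\phi_{\i}|}{|\Pt{\j}{}-\Pt{\i}{}|}\,\wh(|\Pt{\j}{}-\Pt{\i}{}|).
\end{equation}

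Next I would split $\wh=\sqrt{\wh}\cdot\sqrt{\wh}$ and apply Cauchy--Schwarz in the index $\j$, writing
\begin{equation}
	\Pv{\j}\,\frac{|\phi_{\j}-\phi_{\i}|}{|\Pt{\j}{}-\Pt{\i}{}|}\,\wh(|\Pt{\j}{}-\Pt{\i}{}|)
	=\sqrt{\Pv{\j}\wh(|\Pt{\j}{}-\Pt{\i}{}|)}\cdot\sqrt{\Pv{\j}\wh(|\Pt{\j}{}-\Pt{\i}{}|)}\,\frac{|\phi_{\j}-\phi_{\i}|}{|\Pt{\j}{}-\Pt{\i}{}|}.
\end{equation}
Squaring then gives
\begin{equation}
	|\GradApp{}\phi_{\i}|^{2}\le \Dim^{2}\!\left(\sum_{\j\neq\i}\Pv{\j}\wh(|\Pt{\j}{}-\Pt{\i}{}|)\right)\!\left(\sum_{\j\neq\i}\Pv{\j}\wh(|\Pt{\j}{}-\Pt{\i}{}|)\,\frac{|\phi_{\j}-\phi_{\i}|^{2}}{|\Pt{\j}{}-\Pt{\i}{}|^{2}}\right),
\end{equation}
and by the semi-regularity assumption \eqref{def:semi-regular} the first factor is bounded by $\ConstSemiReg$.

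Finally I would multiply by $\Pv{\i}$, sum over $\i\in\IndexSet{\Dm}$, and identify the resulting double sum with the defining expression of $\NormDiscHz{\phi}{1}{\Dm}^{2}$, yielding
\begin{equation}
	\NormDiscL{\GradApp{}\phi}{2}{\Dm}^{2}\le \Dim^{2}\ConstSemiReg\sum_{\i\in\IndexSet{\Dm}}\Pv{\i}\sum_{\j\neq\i}\Pv{\j}\wh(|\Pt{\j}{}-\Pt{\i}{}|)\,\frac{|\phi_{\j}-\phi_{\i}|^{2}}{|\Pt{\j}{}-\Pt{\i}{}|^{2}}=\Dim\,\ConstSemiReg\,\NormDiscHz{\phi}{1}{\Dm}^{2}.
\end{equation}
The only delicate step is the Cauchy--Schwarz split; the rest is bookkeeping of the factor $\Dim$ that appears explicitly in the definitions of both $\GradApp{}$ and $\NormDiscHz{\cdot}{1}{\Dm}$.
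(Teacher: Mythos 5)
Your proposal is correct and follows essentially the same route as the paper: a pointwise bound on $|\GradApp{}\phi_{\i}|$, the Cauchy--Schwarz split of $\Pv{\j}\wh$ in the index $\j$, the semi-regularity bound on the multiplicity factor, and the final summation with the factor of $\Dim$ absorbed into the definition of $\NormDiscHz{\cdot}{1}{\Dm}$. No gaps.
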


\begin{proof}
	By the Cauchy--Schwarz inequality, we have 
		\begin{align}
			\NormDiscL{\GradApp{}\phi}{2}{\Dm}^2&= \sum_{\i\in\IndexSet{\Dm}}\Pvi\BrA{\GradApp{}\phi_{\i}}^2\\
			&\leq \Dim^2\sum_{\i\in\IndexSet{\Dm}}\Pvi\BrS{\sum_{\j\neq\i}\Pv{\j}\frac{|\phi_{\j}-\phi_{\i}|}{|\Pt{\j}{}-\Pt{\i}{}|}\wh(|\Pt{\j}{}-\Pt{\i}{}|)}^2\\
			&\leq \Dim^2\sum_{\i\in\IndexSet{\Dm}}\Pvi\BrS{\sum_{\j\neq\i}\Pv{\j}\frac{|\phi_{\j}-\phi_{\i}|^2}{|\Pt{\j}{}-\Pt{\i}{}|^2}\wh(|\Pt{\j}{}-\Pt{\i}{}|)}\BrS{\sum_{\j\neq\i}\Pv{\j}\wh(|\Pt{\j}{}-\Pt{\i}{}|)}. 
	\end{align}
		As family $\{(\PtSet, \PvSet, \h)\}$ is semi-regular, we obtain
	\begin{equation}
		\NormDiscL{\GradApp{}\phi}{2}{\Dm}^2\leq\Dim\,\ConstSemiReg\NormDiscHz{\phi}{1}{\Dm}^2. 
	\end{equation}
\end{proof}

Then, we establish the following stability of the generalized particle method for Poisson equation \eqref{Poisson_disc}. 
\begin{theorem}
\label{thm:stability}
Assume that particle distribution $\PtSet$ satisfies $\h$-connectivity and family $\{(\PtSet, \PvSet, \h)\}$ is semi-regular. 
Then, there exists constant $c$ such that
\begin{equation}
	\NormDiscHz{\SolApp}{1}{\Dm}\leq \ConstThm\NormDiscL{\DmFunc}{2}{\Dm}. 
	\label{thm:stability:ineq}
\end{equation}
\end{theorem}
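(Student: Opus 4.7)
The plan is to carry out a standard discrete energy estimate, testing the equation against the solution itself and chaining together the three properties established just above: the integration-by-parts identity, the equivalence between $-\InnerProdDisc{\LapApp{}\phi}{\phi}{\Dm}$ and the discrete $H^1_0$-seminorm squared, and the bound on $\NormDiscL{\GradApp{}\phi}{2}{\Dm}$ by $\NormDiscHz{\phi}{1}{\Dm}$ under semi-regularity.

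Concretely, I would start from the discrete Poisson equation \eqref{Poisson_disc}, take the discrete inner product of both sides with $\SolApp\in\FsSolApp$ over $\Dm$, and rewrite it as
\begin{equation}
-\InnerProdDisc{\LapApp{}\SolApp}{\SolApp}{\Dm}=\InnerProdDisc{\DivAppPlus\DmFuncEx}{\SolApp}{\Dm}.
\end{equation}
The left-hand side equals $\NormDiscHz{\SolApp}{1}{\DmH}^2$ by \eqref{lem:propaties_disc_norm:equiv_norm}, which in turn dominates $\NormDiscHz{\SolApp}{1}{\Dm}^2$. For the right-hand side, since $\DmFuncEx\in\FsSolApp^\Dim$ (it vanishes on $\IndexSet{\BdH}$) and $\SolApp\in\FsSolApp$, the integration-by-parts identity \eqref{lem:propaties_disc_norm:integral_of_parts} applies and gives
\begin{equation}
\InnerProdDisc{\DivAppPlus\DmFuncEx}{\SolApp}{\Dm}=-\InnerProdDisc{\DmFuncEx}{\GradApp{}\SolApp}{\Dm}.
\end{equation}

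Next I would apply the discrete Cauchy--Schwarz inequality componentwise to obtain $|\InnerProdDisc{\DmFuncEx}{\GradApp{}\SolApp}{\Dm}|\leq \NormDiscL{\DmFuncEx}{2}{\Dm}\NormDiscL{\GradApp{}\SolApp}{2}{\Dm}$, invoke Lemma \ref{lem:inequarity_disc_norm} to bound the factor $\NormDiscL{\GradApp{}\SolApp}{2}{\Dm}$ by $\sqrt{\Dim\,\ConstSemiReg}\,\NormDiscHz{\SolApp}{1}{\Dm}$, and note that $\NormDiscL{\DmFuncEx}{2}{\Dm}=\NormDiscL{\DmFunc}{2}{\Dm}$ because $\DmFuncEx_{\i}=\DmFunc_{\i}$ on $\IndexSet{\Dm}$ and the discrete $L^2$-norm on $\Dm$ sees only those indices. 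Combining everything yields
\begin{equation}
\NormDiscHz{\SolApp}{1}{\Dm}^2\leq \sqrt{\Dim\,\ConstSemiReg}\,\NormDiscL{\DmFunc}{2}{\Dm}\NormDiscHz{\SolApp}{1}{\Dm},
\end{equation}
and dividing by $\NormDiscHz{\SolApp}{1}{\Dm}$ (the trivial case $\SolApp\equiv 0$ being immediate) gives the claim with $\ConstThm=\sqrt{\Dim\,\ConstSemiReg}$.

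There is no real obstacle once the three preparatory lemmas are in hand; the only points requiring a moment's care are verifying that $\DmFuncEx$ lies in $\FsSolApp^\Dim$ so that the integration-by-parts lemma genuinely applies, and that the $\h$-connectivity assumption (used only for unique solvability in Theorem \ref{theorem:unique_solvability}) is not actually needed for this a~priori bound but is retained because the stability statement is only meaningful once a solution is known to exist.
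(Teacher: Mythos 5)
Your proposal is correct and follows essentially the same route as the paper's proof: test the equation with $\SolApp$, apply \eqref{lem:propaties_disc_norm:equiv_norm} on the left, then \eqref{lem:propaties_disc_norm:integral_of_parts}, the discrete Cauchy--Schwarz inequality, and Lemma \ref{lem:inequarity_disc_norm} on the right, yielding $\ConstThm=\sqrt{\Dim\,\ConstSemiReg}$. Your side remarks (that $\NormDiscL{\DmFuncEx}{2}{\Dm}=\NormDiscL{\DmFunc}{2}{\Dm}$ and that $\h$-connectivity enters only through solvability) are accurate and left implicit in the paper.
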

\begin{proof}
By the Cauchy--Schwarz inequality, \eqref{Poisson_disc}, and Lemmas \ref{lem:propaties_disc_norm} and \ref{lem:inequarity_disc_norm}, we have
\begin{align*}
	\NormDiscHz{\SolApp}{1}{\Dm}^2
		&\leq\BrA{-\InnerProdDisc{\LapApp{}\SolApp}{\SolApp}{\Dm}}
		\\
		&=\BrA{\InnerProdDisc{\DivAppPlus\widehat{f}}{\SolApp}{\Dm}}
		\\
		&=\BrA{-\InnerProdDisc{\widehat{f}}{\GradApp{}\SolApp}{\Dm}}
		\\
		&\leq \NormDiscL{\DmFunc}{2}{\Dm}\NormDiscL{\GradApp{}\SolApp}{2}{\Dm}
		\\
		&\leq \sqrt{\Dim\,\ConstSemiReg}\NormDiscL{\DmFunc}{2}{\Dm}\NormDiscHz{\SolApp}{1}{\Dm}. 
\end{align*}
Consequently, we obtain \eqref{thm:stability:ineq}. 
\end{proof}

\begin{remark}
For function space $\FsSolApp^n\,(n\in\dN)$, the discrete $L^2$ norm, $\NormDiscL{\cdot}{2}{\Dm}$, satisfies the conditions of the norm. 
Moreover, if and only if particle distribution $\PtSet$ satisfies $\h$-connectivity, then the discrete $H^1_0$ norm, $\NormDiscHz{\cdot}{1}{\Dm}$, satisfies the conditions of the norm. 
\end{remark}

\section{Concluding remarks}
\label{sec6}
We have analyzed the stability of a generalized particle method for a Poisson equation with a source term given by a divergence form. 
We have obtained a unique solvability of the discretized Poisson equation by introducing a connectivity condition for particle distributions, which is referred to as $h$-connectivity.
Moreover, we have established the stability of the discretized Poisson equation based on the semi-regularity of a family of discrete parameters and discrete Sobolev norms with properties such as integration by parts. 

In future, we will analyze the error estimates of the discretized Poisson equation by showing properties such as the discrete Poincar\'{e} inequality. 
Moreover, we will extend these results to the incompressible viscous flow problem. 

\appendix
\section{Derivation of approximate operators}
\label{appendix:deriv_app_diver}
Assume a two-dimensional or three-dimensional space, $\Dim=2,3$. 
Assume a particle distribution on a square lattice with spacing $\Dx$. 
For $\i,\j=1,2,\dots,\N$, let $\DmLattice{\i}=(\Pt{\i}{}-\Dx/2,\Pt{\i}{}+\Dx/2)^\Dim$, $\BdLattice{\i}{\j}\deq\DmLatticeOl{\i}\cap\DmLatticeOl{\j}$, and $\IndexSetLatticeNext{\i}\deq\SetNd{\k=1,2,\dots,\N}{\Meas{\BdLattice{\i}{\k}}\neq0,\,\k\neq\i}$. 
As $\Meas{\DmLattice{\i}}=\Dx^\Dim$ and  $\Meas{\BdLattice{\i}{\j}}=\Dx^{\Dim-1}\,(\j\in\IndexSetLatticeNext{\i})$, by the divergence theorem, we can approximate the divergence of $\psi:\DmH\ra\dRd$ on $\Pt{\i}{}\in\PtSet$ as
\begin{align}
	\Div\psi_{\i}
		&\approx\dfrac{1}{\Meas{\DmLattice{\i}}}\int_{\DmLattice{\i}}\Div\psi(x)\dx\\
		&=\dfrac{1}{\Dx^\Dim}\int_{\partial\DmLattice{\i}}\psi(x)\cdot n\,ds\\
		&\approx\dfrac{1}{\Dx^\Dim}\sum_{\j\in\IndexSetLatticeNext{\i}}\Meas{\BdLattice{\i}{\j}}\psi\BrS{\dfrac{\Pt{\i}{}+\Pt{\j}{}}{2}}\cdot\dfrac{\Pt{\j}{}-\Pt{\i}{}}{|\Pt{\j}{}-\Pt{\i}{}|}\\
		 &\approx\dfrac{1}{\Dx^\Dim}\sum_{\j\in\IndexSetLatticeNext{\i}}\Meas{\BdLattice{\i}{\j}}\dfrac{\psi_{\j}+\psi_{\i}}{2}\cdot\dfrac{\Pt{\j}{}-\Pt{\i}{}}{|\Pt{\j}{}-\Pt{\i}{}|}\\
		 &=\dfrac{1}{2}\sum_{\j\in\IndexSetLatticeNext{\i}}\dfrac{\psi_{\j}+\psi_{\i}}{|\Pt{\j}{}-\Pt{\i}{}|}\cdot\dfrac{\Pt{\j}{}-\Pt{\i}{}}{|\Pt{\j}{}-\Pt{\i}{}|}, 
		 \label{appendix:approx_diver}
\end{align}
where $n$ is the outward normal vector on the boundary $\partial\DmLattice{\i}$. 
Further, using the central difference, we can approximate the Laplacian of $\phi:\DmH\ra\dR$ on $\Pt{\i}{}\in\PtSet$ as
\begin{align}
	\Lap\phi_{\i}
		&\approx\dfrac{1}{\Meas{\DmLattice{\i}}}\int_{\DmLattice{\i}}\Lap\phi(x)\dx\nn\\
		&=\dfrac{1}{\Dx^\Dim}\int_{\partial\DmLattice{\i}}\Grad\phi(x)\cdot n\,ds\\
		&\approx\dfrac{1}{\Dx^\Dim}\sum_{\j\in\IndexSetLatticeNext{\i}}\Meas{\BdLattice{\i}{\j}}\Grad\phi\BrS{\dfrac{\Pt{\i}{}+\Pt{\j}{}}{2}}\cdot\dfrac{\Pt{\j}{}-\Pt{\i}{}}{|\Pt{\j}{}-\Pt{\i}{}|}\\
		 &\approx\dfrac{1}{\Dx^\Dim}\sum_{\j\in\IndexSetLatticeNext{\i}}\Meas{\BdLattice{\i}{\j}}\dfrac{\phi_{\j}-\phi_{\i}}{|\Pt{\j}{}-\Pt{\i}{}|}\\
		 &=\sum_{\j\in\IndexSetLatticeNext{\i}}\dfrac{\phi_{\j}-\phi_{\i}}{|\Pt{\j}{}-\Pt{\i}{}|^2}
		 \label{appendix:approx_lap}
\end{align}
By noting that the number of elements of $\IndexSetLatticeNext{\i}$ is $2\Dim$, we can derive approximate operators \eqref{def:DivAppPlus} and \eqref{def:LapApp} as the weighted averages of \eqref{appendix:approx_diver} and \eqref{appendix:approx_lap}, respectively. 
As the approximation procedures in \eqref{appendix:approx_diver} and \eqref{appendix:approx_lap} are same as that of the finite volume method based on Voronoi decomposition, we can regard approximate operators \eqref{def:DivAppPlus} and \eqref{def:LapApp} as approximations based on the finite volume method. 

\section*{Acknowledgments}
This work was supported by JSPS KAKENHI Grant Number 17K17585 and by JSPS A3 Foresight Program.


\end{document}